\documentclass[12pt, reqno]{amsart}
\usepackage{amssymb,amsmath}
\usepackage{amsmath, amsthm, amscd, amsfonts, amssymb, graphicx, color}
\usepackage{amsmath, amsthm, amssymb}
\usepackage{amsfonts}
\usepackage{graphicx}

\newtheorem{theorem}{Theorem}[section]
\newtheorem{lemma}{Lemma}[section]
\newtheorem{corollary}{Corollary}[section]

\newtheorem{remark}{Remark}[section]

\newtheorem{example}{Example}[section]
\newcommand{\M}{\mathbb{M}_{n}}
\newcommand{\MMM}{\mathbb{M}^s_{n, \alpha}}

\newcommand{\C}{\mathbb C}

\begin{document}

\title[Numerical Radius of Products of Special Matrices]{Numerical Radius of Products of Special Matrices}

\author[M. Alakhrass]{Mohammad Alakhrass}	

\address{ Department of Mathematics, University of Sharjah , Sharjah 27272, UAE.}
\email{\textcolor[rgb]{0.00,0.00,0.84}{malakhrass@sharjah.ac.ae}}

\subjclass[2010]{15A45, 15A60, 47A12, 47A30.}

\keywords{Numerical radius, Sectorial matrices, accretive-dissipative matrices, Matrix inequalities}	
\maketitle

\begin{abstract}
The purpose of this note is to present upper bounds estimations for the numerical radius of a products and Hadamard products of special matrices, including sectorial and accretive-dissipative matrices.
\end{abstract}

\section{Introduction}

Let $\M$ be the algebra of all $n \times n$ complex matrices. If $X=[x_{i,j}], Y=[y_{i,j}] \in \M$,
then their Hadamard product $X\circ Y$ is the matrix $[x_{i,j}y_{i,j}].$
The cartesian decomposition of $X \in \M$ is presented as
\begin{equation}\label{cartesian decomposition}
X=A+iB,
\end{equation}
where $A$ and $B$ are the Hermitian matrices $A= Re(X)=\frac{X+X^*}{2}$ and $B= Im(X)=\frac{X-X^*}{2i}$.
A matrix $X$ is said to be accretive (resp. dissipative) if in its cartesian decomposition \eqref{cartesian decomposition} the matrix $A$ (resp. $B$) is positive definite. If both $A$ and $B$, in the decomposition \eqref{cartesian decomposition}, are positive definite, $X$ is called accretive-dissipative.

The numerical range of $X \in \M$ is the compact convex subset of the complex plane defined as follows:
$$
W(X)=\{ \langle {Xx,x} \rangle: x \in \mathbb{C}^n, ||x||=1 \},
$$
where $\langle {\cdot, \cdot} \rangle$  is the standard inner product on $\C^n$ and $|| \cdot ||$ is the Euclidean norm on $\C^n$.
A very important result is that
$$
\sigma(X) \subset W(X),
$$
where $\sigma(X)$ is the spectrum of $X$.

For $\alpha \in [0, \pi/2)$, let $S_{\alpha}$ be the sector defined in the complex plane by
$$
S_{\alpha}=\{z \in \mathbb{C}: Re(z) > 0, |Im(z)| \leq \tan(\alpha) Re(z)  \}.
$$
A matrix $X$ is called sectorial if $W(z X) \subset S_{\alpha}$ for some complex number $z$ with $|z|=1$.
The smallest possible such $\alpha$ is called the index of sectoriality.

For $\alpha \in [0, \pi/2)$, let $\MMM$ be the class of all $n \times n$ matrices $X$ with $W(zX) \subset S_{\alpha}$ for
some complex number $z$ with $|z|=1$.

It is clear that $X$ is accretive-dissipative if and only if
$W (e^{-\pi i/4} X) \subset S_{\pi/4}$, and hence $X \in \MMM$ with $\alpha=\frac{\pi}{4}$. For more study of sectorial matrices see \cite{Alakhrass-2021, Alakhrass-Lieb, Alakhrass-A note in sectorial matrices, Yu-Popov-2003, Drury2013-4, Minghua2016, Minghua2017, Zhang-2015} and the references
therein.

A norm $N$ on $\mathbb{M}_n$ is said to be unitarily invariant if it satisfies the property $N(UXV)$ for all $X\in\mathbb{M}_n$ and all unitaries $U,V\in\mathbb{M}_n,$ and it is said to be multiplicative if $N(AB) \leq N(A) N(B)$ for all  $A,B \in \M$. Examples of
such unitarily invariant multiplicative norms are the Schatten $p$-norm defined by
$||X||_p = \left(\sum_{j=1}^{n} s^p_j(X) \right)^p,\;p\geq 1.$ When $p=\infty$, this last norm is just the usual operator norm defined by $\|X\|=\sup_{\|x\|=1}\|Xx\|.$

Associated with numerical range, the numerical radius of $X$ is defined by
$$
\omega(X)=\sup  \{ |z|: z \in W(X)\}.
$$
It is well known that $w(\cdot)$ defines a norm on $\M$  which is equivalent to the usual operator norm $\| \cdot \|$. In fact we have
\begin{equation}\label{A}
\frac{1}{2} \|X\|  \leq \omega(X) \leq   \| X \|; \quad  \forall X \in \M.
\end{equation}
Moreover, if $X \in\mathbb{M}_n$ is normal then $\omega(X)=\| X \|.$ Therefore, the inequalities in \eqref{A} are sharp.

Obviously, $\omega( \cdot )$ defines a weakly unitarily invariant norm on $\M$; that is
it satisfies the property
$\omega(UXU)=\omega(X)$ for all $X\in\mathbb{M}_n$ and all unitary $U \in\mathbb{M}_n.$

\begin{example}\label{E1}
Let
$
X=
\left(
  \begin{array}{cc}
    0 & 2 \\
    0 & 0 \\
  \end{array}
\right)
$
and
$
Y=
\left(
  \begin{array}{cc}
    0 & 0 \\
    2 & 0 \\
  \end{array}
\right).
$
Then
$\omega(X)= \omega(Y)= 1$ and $\omega(XY)= 4 $
\end{example}

Example \ref{E1} shows that $\omega(\cdot)$ is not a multiplicative norm. However, the inequalities in \eqref{A} implies that $4 \omega( \cdot )$
is a multiplicative norm. That is for all $X, Y \in \M$,
$4 \omega(XY) \leq \left(4 \omega(X) \right) \left( 4 \omega(Y)\right)$. Equivalently,

\begin{equation}\label{B}
\omega(XY) \leq  4 \omega(X) \omega(Y); \quad  \forall X, Y \in \M.
\end{equation}

Obviously, Example \ref{E1} shows that the inequality \eqref{B} is sharp and the constant 4
is the best possible in \eqref{B}.

The Hadamard product version of \eqref{B} can be written as

\begin{equation}\label{C}
\omega(X \circ Y) \leq 2 \omega(X) \omega(Y).
\end{equation}
That is, $2 \omega( \cdot )$ is a multiplicative norm over the Hadamard product. See \cite[p. 73]{Horn and Jonson-Topics-book-1991}.

The following example shows that the constant 2 is the best possible in \eqref{C}.

\begin{example}\label{E2}
Let
$
X=Y=
\left(
  \begin{array}{cc}
    0 & 2 \\
    0 & 0 \\
  \end{array}
\right)
$
Then
$\omega(X)= \omega(Y)= 1$ and $\omega(X \circ Y)= 2 $.
\end{example}

By considering special matrices $X$ and $Y$, it is possible to obtain better estimations than those in \eqref{B} and \eqref{C}.

If $XY=YX$, then
\begin{equation}\label{D}
\omega(XY) \leq 2 \omega(X) \omega(Y).
\end{equation}
See \cite[Theorem 2.5-2]{Gustafson-Rao-1997}.

If $X$ or $Y$ is normal such that $XY=YX$, then
\begin{equation}\label{E}
\text{} \quad \omega(XY) \leq \omega(X) \omega(Y).
\end{equation}
See \cite[Corollary 2.5-6]{Gustafson-Rao-1997}.

If $X$ or $Y$ is normal. Then

\begin{equation}\label{H}
\omega(X \circ Y) \leq \omega(X)\omega(Y).
\end{equation}
See \cite[Corollary 4.2.17]{Horn and Jonson-Topics-book-1991}.

If $A, B \in \M$ and $A=[a_{ij}]$ is positive semidefinite, then
\begin{equation}\label{I}
\omega(A \circ B) \leq   \left( \max_j a_{jj} \right) \, \, \omega(B).
\end{equation}
See \cite[Corollary 4]{Ando-1991} and \cite[Proposition 4.1]{Gau-Wu-2016}.

The purpose of this short note is to add more inequalities to the above list. More precisely, we give estimations of the numerical radius of
products or Hadamard products of sectorial matrices and related matrices such as accretive and dissipative matrices.

\section{Main Results}

We start this section by the following two observations.

\begin{lemma}\label{L1}
Let $X \in \M$. If $W(X) \subset S_{\alpha}$, then
$$
\| X \| \leq  \sec(\alpha) \| Re (X) \|.
$$
\end{lemma}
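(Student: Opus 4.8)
The plan is to pass from the geometric sectoriality hypothesis to an operator inequality between the Hermitian parts, and then to exploit a congruence factorization of $X$ through $Re(X)$. Write the Cartesian decomposition $X = A + iB$ with $A = Re(X)$ and $B = Im(X)$ both Hermitian. For a unit vector $x$ one has $Re\langle Xx,x\rangle = \langle Ax,x\rangle$ and $Im\langle Xx,x\rangle = \langle Bx,x\rangle$, so the hypothesis $W(X)\subset S_\alpha$ says exactly that $\langle Ax,x\rangle > 0$ and $|\langle Bx,x\rangle| \leq \tan(\alpha)\langle Ax,x\rangle$ for every unit vector $x$. The first condition shows $A$ is positive definite (hence invertible), and the second, read as an inequality of quadratic forms, gives
\begin{equation}\label{L1-twoSided}
-\tan(\alpha)\, A \;\leq\; B \;\leq\; \tan(\alpha)\, A.
\end{equation}

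Next I would strip off $A$ by a congruence. Since $A > 0$, the positive definite square root $A^{1/2}$ is invertible, and setting $C = A^{-1/2} B A^{-1/2}$ (a Hermitian matrix) one can factor
\begin{equation}\label{L1-factor}
X \;=\; A + iB \;=\; A^{1/2}\bigl(I + iC\bigr)A^{1/2}.
\end{equation}
Conjugating \eqref{L1-twoSided} by $A^{-1/2}$ turns it into $-\tan(\alpha) I \leq C \leq \tan(\alpha) I$, that is, $\|C\| \leq \tan(\alpha)$. Now apply submultiplicativity of the operator norm to \eqref{L1-factor}: $\|X\| \leq \|A^{1/2}\|\,\|I + iC\|\,\|A^{1/2}\| = \|A\|\,\|I + iC\|$, using $\|A^{1/2}\|^2 = \|A\|$. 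Finally $I + iC$ is normal, so its norm equals its spectral radius; its eigenvalues are $1 + i\lambda$ for eigenvalues $\lambda$ of $C$ with $|\lambda|\leq \tan(\alpha)$, whence $\|I + iC\| = \max_\lambda \sqrt{1 + \lambda^2} = \sqrt{1 + \tan^2(\alpha)} = \sec(\alpha)$. Combining the estimates yields $\|X\| \leq \sec(\alpha)\,\|A\| = \sec(\alpha)\,\|Re(X)\|$, as claimed.

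I expect the only genuinely delicate point to be the transition from the pointwise scalar bound $|\langle Bx,x\rangle| \leq \tan(\alpha)\langle Ax,x\rangle$ to the two-sided operator inequality \eqref{L1-twoSided}: one must observe that $\langle Bx,x\rangle \leq \tan(\alpha)\langle Ax,x\rangle$ for all $x$ is equivalent to $\tan(\alpha)A - B \geq 0$ as Hermitian matrices, and similarly for the other side. Everything after the factorization \eqref{L1-factor} is routine, the essential idea being that the congruence by $A^{1/2}$ reduces the sectorial matrix to the normal matrix $I + iC$ whose norm is computed exactly. A minor point to record is that the strict positivity built into $S_\alpha$ is precisely what guarantees $A$ is invertible and hence that $A^{-1/2}$, and with it $C$, are well defined.
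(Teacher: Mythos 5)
Your proof is correct and complete. Note that the paper itself gives no proof of this lemma: it is quoted from the cited references (Alakhrass, Alakhrass--Sababheh, Zhang), so there is no internal argument to compare against. What you have written is essentially a self-contained reconstruction of the standard argument behind those citations, namely Zhang's sectorial decomposition: your factorization $X = A^{1/2}(I+iC)A^{1/2}$ with $C = A^{-1/2}BA^{-1/2}$ Hermitian and $\|C\| \leq \tan(\alpha)$ is exactly the decomposition $X = T^{*}(I+iG)T$ of the references, specialized to $T = A^{1/2}$. Every step checks out: the passage from the pointwise bound $|\langle Bx,x\rangle| \leq \tan(\alpha)\langle Ax,x\rangle$ to the two-sided operator inequality $-\tan(\alpha)A \leq B \leq \tan(\alpha)A$ is just the definition of the positive semidefinite order applied to $\tan(\alpha)A \pm B$; congruence by $A^{-1/2}$ preserves that order; $\|A^{1/2}\|^{2} = \|A\|$ since $A \geq 0$; and $I+iC$ is normal, so its norm is its spectral radius. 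One cosmetic slip: you wrote $\|I+iC\| = \max_{\lambda}\sqrt{1+\lambda^{2}} = \sqrt{1+\tan^{2}(\alpha)}$, but since you only know $|\lambda| \leq \tan(\alpha)$ for the eigenvalues $\lambda$ of $C$, the second equality should be the inequality $\|I+iC\| \leq \sec(\alpha)$; as you only need the upper bound, the conclusion $\|X\| \leq \sec(\alpha)\,\|Re(X)\|$ is unaffected.
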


The above Lemma can be found in \cite{Alakhrass-2021}, \cite{Alakhrass-A note in sectorial matrices} , \cite{Alakhrass-Lieb} and \cite{Zhang-2015}.

\begin{remark} \label{R1}

\begin{enumerate}
\item
We recall that $\omega( \cdot )$ defines a self-adjoint norm on $\M$, that is
it is a norm  satisfies the properties $w(X^*)=w(X)$ for all $X \in \M$. Therefore,
\begin{align}
\omega \left(Re(X) \right) &= \omega \left(\frac{X + X^*}{2} \right)  \notag \\
&\leq  \frac{1}{2} \left( \omega \left( X+ X^* \right) \right) \notag \\
&\leq  \frac{1}{2} \left( \omega \left( X \right)  + \omega \left(X^* \right)  \right) \notag \\
&\leq  \frac{1}{2} \left( \omega \left( X \right)  + \omega \left(X \right)  \right) \notag \\
&\leq  \omega \left( X \right).  \notag
\end{align}

\item
Let $X \in \M$ and let $X=A + i B$ be its cartesian decomposition. If $W(X) \subset S_{\alpha}$, then for any $v \in \mathbb{C}^n$ with $||v||=1$
$ \langle Av \; , \; v \rangle, \langle Bx \; , \; x \rangle \in \mathbb{R}$ and
$$
\langle Xv \; , \; v \rangle =\langle Av \; , \; v \rangle + i \;  \langle Bv \; , \; v \rangle \in S_{\alpha}.
$$
Therefore,
$$
| \langle Bv \; , \; v \rangle | \leq   \tan(\alpha) \langle Av \; , \; v \rangle.
$$

Taking the supremum over all such v's gives

$$
\omega(B) \leq \tan(\alpha) \omega(A).
$$
\end{enumerate}
\end{remark}

Now we are ready to state the first result.

\begin{theorem}\label{Th-1}
Let $X\in \mathbb{M}^s_{n, \alpha_1}$ and $Y \in \mathbb{M}^s_{n, \alpha_2}$, where $\alpha_2, \alpha_2 \in [0, \pi/2)$. Then
$$
\omega(XY) \leq \sec(\alpha_1)\sec(\alpha_2) \omega(X)\omega(Y).
$$
\end{theorem}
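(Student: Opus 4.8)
The plan is to route the estimate through the operator norm, where submultiplicativity is available, and then return to the numerical radius by exploiting that the real part of a sectorial matrix is Hermitian.

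First I would normalize. Since $X\in\mathbb{M}^s_{n,\alpha_1}$ and $Y\in\mathbb{M}^s_{n,\alpha_2}$, there are unimodular scalars $z_1,z_2$ with $W(z_1 X)\subset S_{\alpha_1}$ and $W(z_2 Y)\subset S_{\alpha_2}$. Because $\omega(cM)=|c|\,\omega(M)$ for every scalar $c$, we have $\omega(XY)=\omega\big((z_1 X)(z_2 Y)\big)$, so after replacing $X$ by $z_1 X$ and $Y$ by $z_2 Y$ I may assume outright that $W(X)\subset S_{\alpha_1}$ and $W(Y)\subset S_{\alpha_2}$; this rotation leaves the right-hand side $\sec(\alpha_1)\sec(\alpha_2)\omega(X)\omega(Y)$ unchanged as well.

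Next comes the chain of inequalities. By the right inequality in \eqref{A} together with the submultiplicativity of the operator norm, $\omega(XY)\leq\|XY\|\leq\|X\|\,\|Y\|$. Lemma \ref{L1} then bounds each factor by its Hermitian part, giving $\|X\|\leq\sec(\alpha_1)\|Re(X)\|$ and $\|Y\|\leq\sec(\alpha_2)\|Re(Y)\|$. Since $Re(X)$ and $Re(Y)$ are Hermitian, hence normal, their operator norm and numerical radius coincide, so $\|Re(X)\|=\omega(Re(X))$ and likewise for $Y$. Finally Remark \ref{R1}(1) yields $\omega(Re(X))\leq\omega(X)$ and $\omega(Re(Y))\leq\omega(Y)$, and assembling all of these produces the claimed bound.

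The argument is short, and the only genuine choice is the decision to detour through $\|\cdot\|$: the numerical radius is not submultiplicative, as Example \ref{E1} shows, so one cannot attack $\omega(XY)$ head-on, whereas the operator norm is submultiplicative and Lemma \ref{L1} is precisely the tool that converts the sectoriality hypothesis into the two $\sec$ factors. The step I would check most carefully is the normalization, namely that the rotations $z_1,z_2$ witnessing sectoriality can be absorbed without disturbing either side of the inequality; everything after that is a direct composition of \eqref{A}, Lemma \ref{L1}, and Remark \ref{R1}.
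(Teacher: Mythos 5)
Your proposal is correct and follows essentially the same route as the paper's proof: pass to the operator norm via \eqref{A}, use submultiplicativity, apply Lemma \ref{L1}, identify $\|Re(\cdot)\|$ with $\omega(Re(\cdot))$ for Hermitian matrices, and finish with Remark \ref{R1}. The only cosmetic difference is that you absorb the unimodular rotations $z_1,z_2$ at the outset (which is valid, since $\omega$ is absolutely homogeneous), whereas the paper inserts $\|X\|=\|zX\|$ and $\|Y\|=\|wY\|$ in the middle of the chain of inequalities.
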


\begin{proof}
Since $X \in \mathbb{M}^s_{n, \alpha_1}$ and $Y \in \mathbb{M}^s_{n, \alpha_2}$, there are two complex numbers
$z, w \in \mathbb{C}$ with $|z|=|w|=1$ such that $W(z X) \subset S_{\alpha_1}$ and
$W(w Y) \subset S_{\alpha_2}$.

Notice that
\begin{align}
\omega(XY)  & \leq  || XY|| \quad   \text{( by \eqref{A} ))} \notag \\
& \leq || X|| \, \, ||Y|| \notag  \\
& =|| z X|| \, \, || w Y||  \notag  \\
& \leq sec(\alpha_1)\sec(\alpha_2)  || Re \left(z X \right)| \, \,  | || Re \left( w  Y \right)||  \quad   \text{(by Lemma \ref{L1})}  \notag \\
& = sec(\alpha_1)\sec(\alpha_2)  \omega( Re \left(z X \right)) \, \,   \omega \left(Re \left( w  Y \right) \right) \notag \\
& \text{(since $Re \left(z X \right))$ and $ Re \left( w  Y \right)$ are Hermitian ) }  \notag \\
& \leq sec(\alpha_1)\sec(\alpha_2) \omega\left(z X  \right)  \, \, \omega(w  Y)  \quad   \text{(by Remark \ref{R1}) }  \notag \\
& = sec(\alpha_1)\sec(\alpha_2) \omega\left(X  \right)  \, \, \omega(Y).   \notag
\end{align}
\end{proof}

We remark that if $X,Y \in \MMM$, Theorem \ref{Th-1} implies that
\begin{equation}\label{B2}
\omega(XY) \leq \sec^2(\alpha)\omega(X)\omega(Y).
\end{equation}
The inequality \eqref{B2} presents a refinement of the inequality \eqref{B} when
$0 \leq \alpha \leq \frac{\pi}{3}$. A particular case is when $X$ and $Y$ are accretive-dissipative as in the following result.

\begin{corollary}
If $X, Y \in \M$ are accretive-dissipative, then
$$
\omega(XY) \leq 2 \omega(X)\omega(Y).
$$
\end{corollary}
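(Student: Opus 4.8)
The plan is to derive this as a direct specialization of the inequality \eqref{B2}. The key observation, already recorded in the introduction, is that an accretive-dissipative matrix belongs to the class $\MMM$ with sectoriality parameter $\alpha = \frac{\pi}{4}$. Specifically, if $X$ is accretive-dissipative, then $W(e^{-\pi i/4} X) \subset S_{\pi/4}$, so $X \in \mathbb{M}^s_{n, \pi/4}$, and the same reasoning applies to $Y$. Thus both hypotheses of Theorem \ref{Th-1} are met with $\alpha_1 = \alpha_2 = \frac{\pi}{4}$.

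First I would invoke Theorem \ref{Th-1} (equivalently \eqref{B2}) with $\alpha = \frac{\pi}{4}$ to obtain
$$
\omega(XY) \leq \sec^2\!\left(\tfrac{\pi}{4}\right) \omega(X)\omega(Y).
$$
The final step is the trivial numerical evaluation $\sec\!\left(\frac{\pi}{4}\right) = \sqrt{2}$, whence $\sec^2\!\left(\frac{\pi}{4}\right) = 2$, yielding the claimed bound
$$
\omega(XY) \leq 2\,\omega(X)\omega(Y).
$$

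I do not anticipate any genuine obstacle here, since the corollary is a pure substitution into an already-established theorem. The only point requiring even minor care is the justification that accretive-dissipative matrices lie in $\mathbb{M}^s_{n, \pi/4}$; but this equivalence is explicitly stated in the introduction and follows from a direct computation on the Cartesian decomposition. The mild conceptual content worth flagging is interpretive rather than technical: this corollary improves the generic constant $4$ in \eqref{B} down to $2$, matching the constant that \eqref{D} achieves under the quite different hypothesis of commutativity, and it shows that the sectorial structure alone (here $\alpha = \frac{\pi}{4}$) suffices to halve the universal multiplicative constant.
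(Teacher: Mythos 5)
Your proof is correct and follows exactly the paper's own route: specialize Theorem \ref{Th-1} to $\alpha_1=\alpha_2=\frac{\pi}{4}$ using the fact that accretive-dissipative matrices lie in $\mathbb{M}^s_{n,\pi/4}$, then evaluate $\sec^2\left(\frac{\pi}{4}\right)=2$. Nothing is missing; the justification that accretive-dissipativity gives $W(e^{-\pi i/4}X)\subset S_{\pi/4}$ is indeed the only point of substance, and you handled it as the paper does.
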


\begin{proof}
The result follows from Theorem \ref{Th-1} and the fact that if $X, Y \in \M$ are accretive-dissipative then $X, Y \in \MMM$ with $\alpha = \pi /4.$
\end{proof}

\begin{remark}
\begin{enumerate}
\item If $X \in \M$ is accretive, i.e. $Re(X)>0$, then $X$ is sectorial with sectorial index
\begin{equation}\label{Accretive}
\alpha_X= \tan^{-1}(|\lambda_1 \left(   \left(Re X \right)^{-1} Im(X))|\right) =
|| \left(Re X \right)^{-1/2} \left( Im X \right)  \left(Re X \right)^{-1/2} ||.
\end{equation}
See \cite{Yu-Popov-2003}.
Therefor $X \in \MMM$ with $\alpha= \alpha_X$.

\item
If $X \in \M$ is dissipative, i.e. $Im(X)>0$, then $i X$ is accretive and hence  $X \in \MMM$ with $\alpha= \alpha_X$.

\end{enumerate}

\end{remark}

\begin{corollary}
If $X, Y \in \M$ are accretive (or dissipative), then
$$
\omega(XY) \leq  (1+a^2)  \omega(X)\omega(Y),
$$
where
$$
a = \max \{ | \lambda_1 \left ( \left(Re X \right)^{-1} Im X \right)| , |\lambda_1 \left (  \left(Re Y  \right)^{-1} Im Y \right)|\}.
$$
\end{corollary}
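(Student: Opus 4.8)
The plan is to deduce this directly from Theorem \ref{Th-1} by feeding in the sectoriality indices supplied by the remark immediately preceding the statement. Writing $a_X = |\lambda_1((Re\,X)^{-1}\,Im\,X)|$ and $a_Y = |\lambda_1((Re\,Y)^{-1}\,Im\,Y)|$, the accretivity of $X$ and $Y$ together with \eqref{Accretive} tells me that $X, Y \in \MMM$ with respective indices $\alpha_X = \tan^{-1}(a_X)$ and $\alpha_Y = \tan^{-1}(a_Y)$. Applying Theorem \ref{Th-1} with these two indices gives at once
$$
\omega(XY) \leq \sec(\alpha_X)\sec(\alpha_Y)\,\omega(X)\,\omega(Y).
$$

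It then remains only to simplify the constant. Using the identity $\sec(\theta)=\sqrt{1+\tan^2(\theta)}$ together with $\tan(\alpha_X)=a_X$ and $\tan(\alpha_Y)=a_Y$, I rewrite $\sec(\alpha_X)\sec(\alpha_Y)=\sqrt{(1+a_X^2)(1+a_Y^2)}$. Since $a=\max\{a_X,a_Y\}$ dominates both $a_X$ and $a_Y$, each factor is at most $1+a^2$, so $(1+a_X^2)(1+a_Y^2)\leq (1+a^2)^2$ and hence $\sqrt{(1+a_X^2)(1+a_Y^2)}\leq 1+a^2$. Substituting this bound into the displayed inequality produces the desired estimate $\omega(XY)\leq (1+a^2)\,\omega(X)\,\omega(Y)$.

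For the dissipative case I would invoke the second part of the remark: multiplying a dissipative matrix by a suitable unimodular scalar turns it into an accretive one, and since both the sectoriality index and the weakly unitarily invariant norm $\omega(\cdot)$ are unaffected by such a rotation, the chain of inequalities above transfers without change. The one step that genuinely demands attention is checking that the index of the rotated matrix is still governed by the eigenvalue quantity appearing in the definition of $a$; this is the place where a sign, or a swap of $Re$ and $Im$, could creep in (note that for a dissipative $X$ it is $Im(X)$, not $Re(X)$, that is positive definite). I expect this bookkeeping to be the only real obstacle, and it is routine. Apart from it, the entire result is carried by Theorem \ref{Th-1} and the elementary inequality $\sqrt{(1+a_X^2)(1+a_Y^2)}\leq 1+a^2$, so there is no genuine analytic difficulty to overcome.
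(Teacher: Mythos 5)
Your proposal is correct and follows essentially the same route as the paper's own proof: invoke Theorem \ref{Th-1} with the indices $\alpha_X=\tan^{-1}(a_X)$ and $\alpha_Y=\tan^{-1}(a_Y)$ supplied by \eqref{Accretive}, then simplify $\sec(\alpha_X)\sec(\alpha_Y)=\sqrt{(1+a_X^2)(1+a_Y^2)}\leq 1+a^2$ using $a=\max\{a_X,a_Y\}$. Your closing remark about the dissipative case (that the rotation making $X$ accretive swaps the roles of $Re\,X$ and $Im\,X$, so the formula for $a$ needs rechecking there) is in fact more careful than the paper, which passes over this point in silence.
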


\begin{proof}
Since $X, Y \in \M$ are accretive (or dissipative), we have
$X \in \mathbb{M}^s_{n, \alpha_X}$ and $Y \in \mathbb{M}^s_{n, \alpha_Y}$,
where $\alpha_X$ and $\alpha_Y$ are as in \eqref{Accretive}. By Theorem \ref{Th-1}, we have
$$
\omega(XY) \leq  \sec(\alpha_X)\sec(\alpha_Y)  \omega(X)\omega(Y).
$$
Now the result follows by Noting that
$$
\sec(\alpha_X)=\sec(\tan^{-1}(|\lambda_1 \left(   \left(Re(X) \right)^{-1} Im(X))|\right))=
\sqrt{ 1 + \left( \lambda_1 \left(   \left(Re X \right)^{-1} Im X\right) \right)^2},
$$
and
$$
\sec(\alpha_Y)=\sec(\tan^{-1}(|\lambda_1 \left(   \left(Re Y \right)^{-1} Im Y)|\right))=
\sqrt{ 1 + \left( \lambda_1 \left(   \left(Re Y \right)^{-1} Im Y\right) \right)^2}.
$$

\end{proof}

Same proof method used to prove Theorem \ref{Th-1} can be used to prove the following more general result.

\begin{theorem}\label{Th-2}
Let $X_j\in \mathbb{M}^s_{n, \alpha_j}, j=1,2,3,...,m$. Then
$$
\omega\left( \prod_{j=1}^{m} X_j \right) \leq \prod_{j=1}^{m} \sec(\alpha_j) \omega(X_j).
$$
\end{theorem}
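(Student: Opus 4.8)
The plan is to follow exactly the chain of estimates that proved Theorem \ref{Th-1}, but now iterated across all $m$ factors. First I would invoke the membership hypotheses: since each $X_j \in \mathbb{M}^s_{n, \alpha_j}$, there exist complex numbers $z_j$ with $|z_j|=1$ such that $W(z_j X_j) \subset S_{\alpha_j}$ for every $j \in \{1,2,\dots,m\}$. The key observation that makes the unimodular twists harmless is that $\prod_{j=1}^m (z_j X_j) = \left(\prod_{j=1}^m z_j\right) \prod_{j=1}^m X_j$, and since $\left|\prod_{j=1}^m z_j\right| = 1$, we have $\omega\!\left(\prod_{j=1}^m X_j\right) = \omega\!\left(\prod_{j=1}^m z_j X_j\right)$; the same scalar invariance of course holds for the operator norm. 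So after this reduction I may work with the twisted matrices $z_j X_j$, each of which now has numerical range genuinely inside $S_{\alpha_j}$.

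The main chain of inequalities then runs as follows. Using \eqref{A} I bound $\omega\!\left(\prod_j X_j\right) \leq \left\|\prod_j X_j\right\|$, and since the operator norm is submultiplicative I get $\left\|\prod_j X_j\right\| \leq \prod_j \|X_j\| = \prod_j \|z_j X_j\|$. Now Lemma \ref{L1} applies termwise to each factor $z_j X_j$ whose numerical range lies in $S_{\alpha_j}$, yielding $\|z_j X_j\| \leq \sec(\alpha_j)\,\|Re(z_j X_j)\|$. Because each $Re(z_j X_j)$ is Hermitian, its operator norm equals its numerical radius, so $\|Re(z_j X_j)\| = \omega(Re(z_j X_j))$. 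Finally Remark \ref{R1}(1) gives $\omega(Re(z_j X_j)) \leq \omega(z_j X_j) = \omega(X_j)$, the last equality again by $|z_j|=1$. Assembling these termwise estimates inside the product produces
$$
\omega\!\left( \prod_{j=1}^{m} X_j \right) \leq \prod_{j=1}^{m} \sec(\alpha_j)\, \omega(X_j),
$$
which is the claim.

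The only structural difference from the two-factor case is that submultiplicativity of the operator norm must be applied $m-1$ times rather than once; this is routine since $\|\cdot\|$ is a genuine multiplicative norm (unlike $\omega$), so no hidden constant accumulates. I expect the main conceptual point—rather than a genuine obstacle—to be the clean separation of the argument into a \emph{single} norm-submultiplicativity step at the level of the full product, followed by \emph{termwise} application of Lemma \ref{L1}, Hermiticity, and Remark \ref{R1}. It is precisely because $\|\cdot\|$ is submultiplicative that we can pass to the product first and only then estimate each factor; attempting to iterate a numerical-radius product bound directly would reintroduce spurious constants like those in \eqref{B}. A small bookkeeping care is needed to confirm that the unimodular scalars $z_j$ factor out cleanly and simultaneously from the operator norm and from $\omega$, but this is immediate from homogeneity of both norms.
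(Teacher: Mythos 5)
Your proof is correct and is exactly the extension of the paper's argument for Theorem \ref{Th-1} that the paper itself invokes for Theorem \ref{Th-2} (the paper gives no separate proof, saying only that the same method applies): bound $\omega$ by the operator norm, use submultiplicativity of $\|\cdot\|$, then apply Lemma \ref{L1}, Hermiticity, and Remark \ref{R1} termwise to the unimodularly rotated factors $z_j X_j$. No gaps; the handling of the unimodular scalars is also sound.
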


Consequently,

\begin{corollary}
If $X_1, X_2, ..., X_m \in \M$ are accretive-dissipative, then
$$
\omega\left( \prod_{j=1}^{m} X_j \right) \leq 2^{m/2} \prod_{j=1}^{m}  \omega(X_j).
$$

\end{corollary}

\begin{corollary}
If $X_1, X_2, ..., X_m \in \M$ are accretive (or dissipative), then
$$
\omega\left( \prod_{j=1}^{m} X_j \right)  \leq  (1+a^2)^{m/2} \prod_{j=1}^{m}  \omega(X_j),
$$
where
$$
a = \max \{ | \lambda_1 \left (   \left(Re X_j \right)^{-1} Im X_j  \right )| , j=1,2,...,m   \}.
$$
\end{corollary}

In what follows, we present upper bounds for the numerical ranges of hadamard products of special matrices. The following Lemma is important in our analysis.
It can be found in
\cite{Alakhrass-2021, Alakhrass-Lieb, Alakhrass-A note in sectorial matrices}.

\begin{lemma}\label{Characterization of T}
Let $T\in \M$. If $W(T) \subset S_{\alpha}$ for some $\alpha\in [0,\pi/2)$.
Then
$$
\left(
  \begin{array}{cc}
   \sec(\alpha) \Re(T) & T \\
    T^* & \sec(\alpha) \Re(T) \\
  \end{array}
\right)\geq 0.
$$
\end{lemma}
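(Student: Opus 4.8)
The plan is to reduce the claimed positivity to a Schur complement condition and then to a scalar operator inequality coming from the sectoriality hypothesis. Write the cartesian decomposition $T = A + iB$ with $A = \Re(T)$ and $B = \Im(T)$ Hermitian. Since $W(T) \subset S_{\alpha}$ lies in the open right half-plane, $\langle Av, v \rangle = \Re\langle Tv, v \rangle > 0$ for every nonzero $v$, so $A$ is positive definite, hence invertible. This is what makes the whole Schur-complement machinery available.

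First I would extract the operator inequality that encodes sectoriality. The defining condition of $S_{\alpha}$ gives, for every unit vector $v$, $|\langle Bv, v \rangle| = |\Im\langle Tv, v \rangle| \leq \tan(\alpha)\,\Re\langle Tv, v \rangle = \tan(\alpha)\langle Av, v \rangle$, and therefore $\tan(\alpha) A \pm B \geq 0$. Conjugating these two inequalities by $A^{-1/2}$ and introducing the Hermitian matrix $C := A^{-1/2} B A^{-1/2}$, they become $-\tan(\alpha) I \leq C \leq \tan(\alpha) I$, i.e. $\|C\| \leq \tan(\alpha)$, equivalently $C^2 \leq \tan^2(\alpha) I$.

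Next, since $\sec(\alpha) A > 0$, the standard Schur complement criterion says the block matrix in the statement is positive semidefinite if and only if its Schur complement is positive semidefinite:
$$\sec(\alpha) A - T^{*}\big(\sec(\alpha) A\big)^{-1} T \geq 0.$$
The key computation is the identity $T^{*} A^{-1} T = (A - iB)A^{-1}(A + iB) = A + B A^{-1} B$. Substituting it and writing $(\sec(\alpha) A)^{-1} = \cos(\alpha) A^{-1}$, the Schur complement becomes $\sec(\alpha) A - \cos(\alpha) A - \cos(\alpha) B A^{-1} B$, which, using $\sec(\alpha) - \cos(\alpha) = \sin^{2}(\alpha)/\cos(\alpha)$, factors as $\cos(\alpha)\big(\tan^{2}(\alpha) A - B A^{-1} B\big)$. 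As $\cos(\alpha) > 0$, positivity of the Schur complement is equivalent to
$$\tan^{2}(\alpha) A - B A^{-1} B \geq 0.$$
Finally I conjugate this last inequality by $A^{-1/2}$: since $A^{-1/2} B A^{-1} B A^{-1/2} = C^{2}$, it is equivalent to $\tan^{2}(\alpha) I - C^{2} \geq 0$, which is exactly the inequality $C^{2} \leq \tan^{2}(\alpha) I$ established above. This closes the argument.

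I expect the only genuinely delicate point to be the algebraic identity $T^{*} A^{-1} T = A + B A^{-1} B$ together with the bookkeeping of the trigonometric factors, so that the Schur complement collapses \emph{exactly} to $\tan^{2}(\alpha) A - B A^{-1} B$; everything else is the routine Schur-complement and congruence machinery, and no subtlety about invertibility arises because $W(T) \subset S_{\alpha}$ forces $A$ to be positive definite.
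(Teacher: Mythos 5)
Your proof is correct: the sectoriality hypothesis does give $A=\Re(T)>0$ and $\tan(\alpha)A\pm B\ge 0$, hence $\|C\|\le\tan\alpha$ for $C=A^{-1/2}BA^{-1/2}$; the identity $T^{*}A^{-1}T=A+BA^{-1}B$ is right; and the Schur-complement reduction is legitimate since $\sec(\alpha)A$ is invertible, so every step checks out. Be aware, though, that the paper itself does not prove this lemma; it only cites earlier work (Alakhrass 2020--2021), where the argument is organized differently: one invokes the sectorial decomposition (going back to Zhang) $T=X^{*}(I+iK)X$ with $X$ invertible and $K$ Hermitian, $\|K\|\le\tan\alpha$, so that $\Re(T)=X^{*}X$, and then the block matrix in question factors by congruence as
$$
\left(\begin{array}{cc} X^{*} & 0\\ 0 & X^{*}\end{array}\right)
\left(\begin{array}{cc} \sec(\alpha) I & I+iK\\ I-iK & \sec(\alpha) I\end{array}\right)
\left(\begin{array}{cc} X & 0\\ 0 & X\end{array}\right),
$$
where the middle factor is positive semidefinite because $\|I+iK\|=\sqrt{1+\|K\|^{2}}\le\sec\alpha$ and a block matrix $\left(\begin{smallmatrix} aI & S\\ S^{*} & aI\end{smallmatrix}\right)$ with $a>0$ is positive semidefinite exactly when $\|S\|\le a$. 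Your argument is essentially this proof with the decomposition made explicit --- your $C$ is Zhang's $K$ with $X=A^{1/2}$ --- but it has the virtue of being fully self-contained: it needs only the standard Schur-complement criterion and congruence, rather than quoting the decomposition theorem and the norm characterization of $2\times 2$ block positivity.
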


Now we estimate the numerical range for a Hadamard product of sectorial matrices.

\begin{theorem}\label{H1}
Let $X\in \mathbb{M}^s_{n, \alpha_1}$ and $Y \in \mathbb{M}^s_{n, \alpha_2}$. Then
$$
\omega(X \circ Y) \leq \sec(\alpha_1)\sec(\alpha_1)  \omega(X)\omega(Y).
$$

\end{theorem}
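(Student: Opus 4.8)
The plan is to work at the level of positive semidefinite block matrices rather than norms, exploiting the characterization in Lemma \ref{Characterization of T} together with the Schur product theorem; this mirrors the architecture of Theorem \ref{Th-1} but is adapted to the entrywise nature of $\circ$. First I would reduce to the unrotated case. Since $X \in \mathbb{M}^s_{n,\alpha_1}$ and $Y \in \mathbb{M}^s_{n,\alpha_2}$, pick unit scalars $z,w$ with $W(zX)\subset S_{\alpha_1}$ and $W(wY)\subset S_{\alpha_2}$. Both sides of the claimed inequality are invariant under these rotations: $\omega(zX)=\omega(X)$, $\omega(wY)=\omega(Y)$, and, by bilinearity of the Hadamard product together with $|zw|=1$, $\omega\big((zX)\circ(wY)\big)=|zw|\,\omega(X\circ Y)=\omega(X\circ Y)$. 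Hence I may assume outright that $W(X)\subset S_{\alpha_1}$ and $W(Y)\subset S_{\alpha_2}$; in particular $Re(X)$ and $Re(Y)$ are Hermitian (indeed positive definite, since $S_{\alpha}\subset\{Re\,z>0\}$).

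Next I would apply Lemma \ref{Characterization of T} to $X$ and to $Y$ to get two positive semidefinite $2n\times 2n$ block matrices, and take their Hadamard product. By the Schur product theorem the result is again positive semidefinite, and since $\circ$ acts blockwise (and $X^*\circ Y^*=(X\circ Y)^*$, with the two diagonal blocks coinciding) it equals
\begin{equation*}
\begin{pmatrix} \sec(\alpha_1)\sec(\alpha_2)\,(Re(X)\circ Re(Y)) & X\circ Y \\ (X\circ Y)^* & \sec(\alpha_1)\sec(\alpha_2)\,(Re(X)\circ Re(Y)) \end{pmatrix}\geq 0.
\end{equation*}

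The key step, which I expect to be the main obstacle, is to convert this block positivity into a numerical radius bound. I would establish the auxiliary fact that whenever $\begin{pmatrix} D & C \\ C^* & D\end{pmatrix}\geq 0$ with $D$ Hermitian, one has $\omega(C)\leq\omega(D)$. Testing positivity on vectors of the form $(x,e^{i\theta}x)$ produces the scalar inequality $2\langle Dx,x\rangle+2Re\big(e^{i\theta}\langle Cx,x\rangle\big)\geq 0$ for all $\theta$; choosing $\theta$ so that $e^{i\theta}\langle Cx,x\rangle=-|\langle Cx,x\rangle|$ gives $|\langle Cx,x\rangle|\leq\langle Dx,x\rangle$, and taking the supremum over unit $x$ (using $D\geq 0$, so $\sup_x\langle Dx,x\rangle=\omega(D)$) yields the claim. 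Applying this with $C=X\circ Y$ and $D=\sec(\alpha_1)\sec(\alpha_2)(Re(X)\circ Re(Y))$ gives $\omega(X\circ Y)\leq\sec(\alpha_1)\sec(\alpha_2)\,\omega\big(Re(X)\circ Re(Y)\big)$.

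Finally I would close the estimate using the Hermitian (hence normal) structure of the real parts: inequality \eqref{H} gives $\omega(Re(X)\circ Re(Y))\leq\omega(Re(X))\,\omega(Re(Y))$, while Remark \ref{R1} gives $\omega(Re(X))\leq\omega(X)$ and $\omega(Re(Y))\leq\omega(Y)$. Chaining these produces $\omega(X\circ Y)\leq\sec(\alpha_1)\sec(\alpha_2)\,\omega(X)\omega(Y)$, as desired. I would remark that an alternative route bypasses Lemma \ref{Characterization of T} entirely: one can bound $\omega(X\circ Y)\leq\|X\circ Y\|\leq\|X\|\,\|Y\|$ (the spectral norm is submultiplicative over $\circ$, as $X\circ Y$ is a compression of $X\otimes Y$) and then invoke Lemma \ref{L1} followed by Remark \ref{R1}; however, the block-positivity argument is the one aligned with the machinery developed just before the statement.
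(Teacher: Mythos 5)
Your proposal is correct, and its overall architecture coincides with the paper's own proof: both apply Lemma \ref{Characterization of T} to the rotated matrices $zX$ and $wY$, take the blockwise Hadamard product of the two resulting positive semidefinite $2n\times 2n$ matrices (Schur product theorem), and close the estimate with inequality \eqref{H} and Remark \ref{R1}. The one step where you genuinely diverge is the conversion of block positivity into an inequality. The paper passes through the operator norm: it invokes the fact that $\|\cdot\|$ is a Lieb function to obtain $\|X\circ Y\|\leq \sec(\alpha_1)\sec(\alpha_2)\,\|Re(zX)\circ Re(wY)\|$, then returns to the numerical radius via $\omega(\cdot)\leq\|\cdot\|$ and the equality of norm and radius on Hermitian matrices. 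You instead prove directly, by testing positivity on vectors $(x,e^{i\theta}x)$, that $\left(\begin{smallmatrix} D & C\\ C^* & D\end{smallmatrix}\right)\geq 0$ with $D$ Hermitian forces $\omega(C)\leq\omega(D)$; this keeps the entire argument at the level of the numerical radius and replaces the citation of the Lieb-function machinery by a short elementary computation, which is a small but real gain in self-containedness (the paper's phrasing fits the framework of its companion papers on Lieb functions). Your preliminary normalization to the unrotated case, using bilinearity of $\circ$ so that $(zX)\circ(wY)=zw\,(X\circ Y)$ with $|zw|=1$, is sound and merely streamlines the bookkeeping that the paper carries along explicitly. Finally, the alternative route you sketch at the end, $\omega(X\circ Y)\leq\|X\circ Y\|\leq\|X\|\,\|Y\|$ followed by Lemma \ref{L1} and Remark \ref{R1}, also works and is precisely the analogue of the paper's proof of Theorem \ref{Th-1}, though it leans on submultiplicativity of the spectral norm over $\circ$ rather than on the block-matrix lemma the paper chose to showcase.
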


\begin{proof}
Since $X\in \mathbb{M}^s_{n, \alpha_1}$ and $Y \in \mathbb{M}^s_{n, \alpha_2}$, there are two complex numbers
$z, w \in \mathbb{C}$ with $|z|=|y|=1$ such that $W(z X) \subset S_{\alpha_1}$ and $W(w Y) \subset S_{\alpha_2}$.
Therefore, by Lemma \ref{Characterization of T}, the following two block matrices
$$
\left(
  \begin{array}{cc}
    \sec(\alpha_1)Re(zX) & zX \\
    \overline{z} X^* &  \sec(\alpha_1) Re(zX)\\
  \end{array}
\right),
\, \, \,
\left(
  \begin{array}{cc}
    \sec(\alpha_2) Re(wY) & wY \\
    \overline{w}Y^* &  \sec(\alpha_2) Re(wY) \\
  \end{array}
\right)
$$
are positive semidefinite. Hence
$$
\left(
  \begin{array}{cc}
    \sec(\alpha_1)\sec(\alpha_2) Re(zX) \circ Re(wY) &zw \left( X \circ Y \right) \\
    \overline{zw} \left (X \circ Y \right)^* &  \sec(\alpha_1)\sec(\alpha_2) Re(zX) \circ Re(wY) \\
  \end{array}
\right)
$$
is also positive semidefinite. Since $|| \cdot ||$ is a Lieb Function, we have
$$
||X \circ Y || =||zw \left( X \circ Y \right)|| \leq  \sec(\alpha_1)\sec(\alpha_2) || Re(zX) \circ Re(wY) ||.
$$
Now, observe that
\begin{align}
\omega (X \circ Y ) & \leq || X \circ Y  || \notag \\
& \leq  \sec(\alpha_1)\sec(\alpha_2)  || Re(zX) \circ Re(wY) ||   \notag \\
& = \sec(\alpha_1)\sec(\alpha_2)   \omega( Re(zX) \circ Re(wY))\notag \\
&\leq  \sec(\alpha_1)\sec(\alpha_2)  \omega( Re(zX)) \omega( Re(wY)) \quad (\text{by \eqref{H}})        \label{AA}  \\
&\leq  \sec(\alpha_1)\sec(\alpha_2)  \omega( zX ) \omega( wY ) \quad (\text{by Remark \ref{R1}})    \notag \\
&= \sec(\alpha_1)\sec(\alpha_2)  |z| \omega( X )  |w| \omega( Y ) \notag \\
&=  \sec(\alpha_1)\sec(\alpha_2)   \omega( X )  \omega( Y ). \notag
\end{align}
\end{proof}

\begin{remark}
\begin{enumerate}
\item
If $X,Y \in \MMM$, then
\begin{equation}\label{C2}
\omega(X \circ  Y) \leq \sec^2(\alpha)\omega(X)\omega(Y).
\end{equation}
It is clear that inequality \eqref{C2} presents a refinement of inequality \eqref{C} for $0 \leq \alpha < \frac{\pi}{4}$.
In particular if $X$ and $Y$ are accretive-dissipative, then both
\eqref{C2} and \eqref{C} give the same estimation.

\item
If $X, Y \in \M$ are accretive (or dissipative), then \eqref{C2} implies that
$$
\omega(X \circ Y) \leq  (1+a^2)  \omega(X)\omega(Y),
$$
where
$$
a = \max \{ | \lambda_1 \left ( \left(Re X \right)^{-1} Im X \right)| , |\lambda_1 \left (  \left(Re Y  \right)^{-1} Im Y \right)|    \}.
$$
\end{enumerate}
\end{remark}

The argument used to prove Theorem \ref{H1} can be easily modified to prove the following more general result.

\begin{theorem}
Let $X_j \in \mathbb{M}^s_{n, \alpha_1}, j=1,2,...,m$. Then
$$
\omega(X_1 \circ ... \circ X_m ) \leq \prod_{j=1}^{m} \sec(\alpha_j) \omega(X_j).
$$
\end{theorem}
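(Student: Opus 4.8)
The plan is to mirror the proof of Theorem \ref{H1}, replacing the two-fold Hadamard product of $2n\times 2n$ block matrices by an $m$-fold one and then iterating the normal-factor inequality \eqref{H}. First I would, for each $j$, choose a unimodular $z_j \in \C$ with $W(z_j X_j) \subset S_{\alpha_j}$; such $z_j$ exists since $X_j \in \mathbb{M}^s_{n,\alpha_j}$. By Lemma \ref{Characterization of T}, each block matrix
$$
M_j=\left(
  \begin{array}{cc}
   \sec(\alpha_j)\, Re(z_j X_j) & z_j X_j \\
   \overline{z_j}\, X_j^* & \sec(\alpha_j)\, Re(z_j X_j) \\
  \end{array}
\right)
$$
is positive semidefinite.

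Next I would form the entrywise Hadamard product $M_1 \circ \cdots \circ M_m$. Because the Hadamard product acts blockwise and the Schur product theorem guarantees that the Hadamard product of positive semidefinite matrices is positive semidefinite, this $2n\times 2n$ matrix is positive semidefinite; its off-diagonal block equals $\left(\prod_{j=1}^{m} z_j\right)\left(X_1 \circ \cdots \circ X_m\right)$, while both diagonal blocks equal $\left(\prod_{j=1}^{m}\sec(\alpha_j)\right) Re(z_1 X_1)\circ\cdots\circ Re(z_m X_m)$. Since the operator norm is a Lieb function and the two diagonal blocks coincide, the norm of the off-diagonal block is dominated by the norm of the common diagonal block, which yields
$$
\|X_1 \circ \cdots \circ X_m\| \leq \left(\prod_{j=1}^{m}\sec(\alpha_j)\right) \|Re(z_1 X_1)\circ\cdots\circ Re(z_m X_m)\|.
$$

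The only step genuinely beyond the two-matrix case, and the point I would flag as the main subtlety, is bounding $\omega\left(Re(z_1 X_1)\circ\cdots\circ Re(z_m X_m)\right)$ by a product of individual numerical radii. Here I would iterate \eqref{H}: each $Re(z_j X_j)$ is Hermitian, so every partial product $Re(z_1 X_1)\circ\cdots\circ Re(z_k X_k)$ is again Hermitian, hence normal, and \eqref{H} applies at each stage with the running product serving as the normal factor. An induction on $k$ then gives $\omega\left(Re(z_1 X_1)\circ\cdots\circ Re(z_m X_m)\right) \leq \prod_{j=1}^{m}\omega\left(Re(z_j X_j)\right)$. Finally, combining $\omega(X_1\circ\cdots\circ X_m)\le\|X_1\circ\cdots\circ X_m\|$ from \eqref{A}, the equality $\omega(\cdot)=\|\cdot\|$ for the normal (indeed Hermitian) matrix $Re(z_1 X_1)\circ\cdots\circ Re(z_m X_m)$, and Remark \ref{R1} (which gives $\omega\left(Re(z_j X_j)\right)\le\omega(z_j X_j)=\omega(X_j)$) chains everything into the claimed bound $\omega(X_1 \circ \cdots \circ X_m) \leq \prod_{j=1}^{m}\sec(\alpha_j)\,\omega(X_j)$. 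The preservation of normality through the iteration is what must be verified carefully; without it, \eqref{H} could not be reapplied at each step.
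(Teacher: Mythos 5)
Your proposal is correct and is precisely the modification the paper has in mind: the paper's proof of this theorem consists of the remark that the argument for Theorem \ref{H1} extends, and you have carried out that extension faithfully (Lemma \ref{Characterization of T} applied to each factor, the Schur product theorem for the $m$-fold Hadamard product of the positive semidefinite block matrices, the Lieb-function norm bound, and the iteration of \eqref{H} using that Hadamard products of Hermitian matrices remain Hermitian). Your flagged subtlety --- preservation of normality through the iteration --- is exactly the point that makes the ``easy modification'' legitimate, so nothing is missing.
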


Consequently,

\begin{corollary}
If $X_1, X_2, ..., X_m \in \M$ are accretive-dissipative, then
$$
\omega\left( X_1 \circ ... \circ X_m  \right) \leq 2^{m/2} \prod_{j=1}^{m}  \omega(X_j).
$$
\end{corollary}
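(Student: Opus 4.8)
The plan is to obtain this as an immediate specialization of the Hadamard-product theorem that precedes it, since accretive-dissipative matrices are exactly the sectorial matrices of index $\pi/4$. The only preparatory observation needed is the membership claim, and everything else reduces to evaluating the secant at a single angle.

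First I would recall the characterization noted already in the introduction: a matrix $X$ is accretive-dissipative if and only if $W(e^{-\pi i/4} X) \subset S_{\pi/4}$, so that $X \in \mathbb{M}^s_{n, \pi/4}$. Applying this to each factor shows that every $X_j$ in the hypothesis lies in $\mathbb{M}^s_{n, \alpha_j}$ with the common value $\alpha_j = \pi/4$. This places us exactly in the setting of the preceding theorem on numerical radii of Hadamard products of sectorial matrices.

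Next I would invoke that theorem with $\alpha_j = \pi/4$ for all $j$, which yields
$$
\omega\left( X_1 \circ \cdots \circ X_m \right) \leq \prod_{j=1}^{m} \sec(\pi/4)\, \omega(X_j).
$$
Since $\sec(\pi/4) = \sqrt{2}$, the product of the secant factors is $(\sqrt{2})^m = 2^{m/2}$, and pulling this constant out in front gives the asserted bound $\omega(X_1 \circ \cdots \circ X_m) \leq 2^{m/2} \prod_{j=1}^{m} \omega(X_j)$.

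There is no genuine obstacle here, as the substantive estimation was already carried out in the theorem; the work is purely bookkeeping. The one point meriting explicit attention is the uniform membership step, namely verifying that every accretive-dissipative factor carries the \emph{same} sectorial index $\pi/4$, so that the product of secants collapses to the clean constant $2^{m/2}$ rather than a product of distinct secant values.
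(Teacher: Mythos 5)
Your proof is correct and matches the paper's approach exactly: the paper derives this corollary as an immediate consequence of the preceding theorem on Hadamard products of sectorial matrices, using the fact that accretive-dissipative matrices lie in $\mathbb{M}^s_{n, \pi/4}$, so that each factor contributes $\sec(\pi/4) = \sqrt{2}$ and the constant becomes $2^{m/2}$. Your explicit attention to the uniform index $\pi/4$ is exactly the bookkeeping the paper leaves implicit.
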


\begin{corollary}
If $X_1, X_2, ..., X_m \in \M$ are accretive (or dissipative), then
$$
\omega\left( X_1 \circ ... \circ X_m  \right)  \leq  (1+a^2)^{m/2} \prod_{j=1}^{m}  \omega(X_j),
$$
where
$$
a = \max \{ | \lambda_1 \left (   \left(Re X_j \right)^{-1} Im X_j  \right )| , j=1,2,...,m   \}.
$$
\end{corollary}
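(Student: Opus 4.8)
The plan is to derive this corollary directly from the multi-factor Hadamard product theorem stated immediately above, in exact parallel with the way the accretive-dissipative product corollary follows from Theorem~\ref{Th-2}. First I would use the remark identifying the sectorial index of an accretive matrix: by \eqref{Accretive}, if $X_j$ is accretive then $X_j \in \MMM$ with index $\alpha_{X_j} = \tan^{-1}\bigl(|\lambda_1((Re X_j)^{-1} Im X_j)|\bigr)$. This places every factor in the sectorial class demanded by the preceding theorem, so applying it gives
$$
\omega(X_1 \circ \cdots \circ X_m) \leq \prod_{j=1}^m \sec(\alpha_{X_j})\, \omega(X_j).
$$

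Next I would rewrite each secant through the elementary identity $\sec(\tan^{-1}(t)) = \sqrt{1+t^2}$, obtaining
$$
\sec(\alpha_{X_j}) = \sqrt{1 + \bigl(\lambda_1((Re X_j)^{-1} Im X_j)\bigr)^2}.
$$
Since $a = \max_j |\lambda_1((Re X_j)^{-1} Im X_j)|$ dominates the absolute value of each eigenvalue appearing here, the monotonicity of $t \mapsto \sqrt{1+t^2}$ shows every factor is at most $\sqrt{1+a^2}$, so that $\prod_{j=1}^m \sec(\alpha_{X_j}) \leq (1+a^2)^{m/2}$. Substituting this bound into the previous display yields the asserted inequality.

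For the dissipative case I would reduce to the accretive one exactly as in the companion remark: multiplying $X_j$ by a suitable unimodular scalar turns it into an accretive matrix without changing membership in $\MMM$ or the relevant index, so the identical argument applies. I do not expect any genuine obstacle here, since the whole analytic weight has already been carried by the multi-factor Hadamard theorem and by the index computation \eqref{Accretive}; the corollary is essentially a bookkeeping step. The only point deserving a line of care is the uniform estimate $\sec(\alpha_{X_j}) \le \sqrt{1+a^2}$, which rests on the definition of $a$ as the maximum over $j$ together with the monotonicity just noted.
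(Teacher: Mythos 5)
Your proposal is correct and follows exactly the route the paper intends: the paper states this corollary as an immediate consequence of the preceding multi-factor Hadamard theorem, and your argument (sectorial index $\alpha_{X_j}$ from \eqref{Accretive}, the identity $\sec(\tan^{-1}(t))=\sqrt{1+t^2}$, and the uniform bound by $a$) is precisely the proof the paper gives for the analogous two-matrix product corollary. Your handling of the dissipative case via multiplication by a unimodular scalar also matches the paper's remark, so there is nothing to add.
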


In the following result we give an estimation for $\omega(X \circ Y)$ in terms of the diagonal entries of $X$ and $Y$.

\begin{theorem}
Let $X=[x_{ij}] \in \mathbb{M}^s_{n, \alpha_1}$ and $Y=[y_{ij}] \in \mathbb{M}^s_{n, \alpha_2}$. Then
$$
\omega(X \circ Y) \leq \sec(\alpha_1)\sec(\alpha_2)
\min \left \{ \max_{j} | x_{jj} |  \, \,  \omega(Y), \max_{j} | y_{jj}| \, \,  \omega(X) \right\}.
$$
\end{theorem}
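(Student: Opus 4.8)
The plan is to run the argument of Theorem \ref{H1} essentially verbatim up to its intermediate estimate, and only at the very last step trade the Hadamard inequality \eqref{H} for the diagonal inequality \eqref{I}. First I would fix unimodular $z,w$ with $W(zX)\subset S_{\alpha_1}$ and $W(wY)\subset S_{\alpha_2}$, form the two positive semidefinite block matrices furnished by Lemma \ref{Characterization of T}, and take their Hadamard product. Exactly as in the proof of Theorem \ref{H1}, the Schur product theorem makes this $2n\times 2n$ block matrix positive semidefinite, and the Lieb-function property of the operator norm yields
$$\omega(X\circ Y)\le \|X\circ Y\|\le \sec(\alpha_1)\sec(\alpha_2)\,\|Re(zX)\circ Re(wY)\|=\sec(\alpha_1)\sec(\alpha_2)\,\omega\bigl(Re(zX)\circ Re(wY)\bigr),$$
the last equality holding because $Re(zX)\circ Re(wY)$ is a Hadamard product of positive semidefinite matrices, hence itself positive semidefinite, so that its norm and numerical radius coincide.

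Next I would observe that $Re(zX)$ is positive semidefinite (this is precisely the $(1,1)$-block positivity appearing in Lemma \ref{Characterization of T}, up to the positive factor $\sec(\alpha_1)$), so that inequality \eqref{I} applies with $A=Re(zX)$ and $B=Re(wY)$, giving
$$\omega\bigl(Re(zX)\circ Re(wY)\bigr)\le \Bigl(\max_j \bigl(Re(zX)\bigr)_{jj}\Bigr)\,\omega\bigl(Re(wY)\bigr).$$
The diagonal entries satisfy $\bigl(Re(zX)\bigr)_{jj}=Re(z x_{jj})\le |z x_{jj}|=|x_{jj}|$, while Remark \ref{R1} supplies $\omega(Re(wY))\le \omega(wY)=\omega(Y)$. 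Substituting these two bounds produces
$$\omega(X\circ Y)\le \sec(\alpha_1)\sec(\alpha_2)\,\max_j|x_{jj}|\,\omega(Y).$$

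Finally, the roles of $X$ and $Y$ are completely symmetric: applying \eqref{I} instead with $A=Re(wY)$ and $B=Re(zX)$ yields the companion bound $\sec(\alpha_1)\sec(\alpha_2)\max_j|y_{jj}|\,\omega(X)$, and taking the smaller of the two gives the stated minimum. I do not anticipate any genuine obstacle, since every ingredient is already in hand; the only points requiring care are (i) verifying that $Re(zX)$ is truly positive semidefinite, so that the hypothesis of \eqref{I} is met, and (ii) the clean passage from $Re(z x_{jj})$ to $|x_{jj}|$, which discards the phase information but is exactly what the factor $|x_{jj}|$ in the statement calls for. One could in fact sharpen the constant by retaining $\max_j Re(z x_{jj})$ in place of $\max_j|x_{jj}|$, but the stated form is the natural phase-free estimate.
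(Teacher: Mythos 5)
Your proposal is correct and follows essentially the same route as the paper's proof: the paper likewise takes the intermediate estimate \eqref{AA} from the proof of Theorem \ref{H1}, applies \eqref{I} with $A=Re(zX)$ and $B=Re(wY)$, passes from $\max_j Re(z x_{jj})$ to $\max_j |x_{jj}|$ and from $\omega(Re(wY))$ to $\omega(Y)$ via Remark \ref{R1}, and obtains the second bound by symmetry before taking the minimum. The only cosmetic difference is that you re-derive the block-matrix/Lieb-function step rather than citing \eqref{AA} directly, and your closing remark about retaining $\max_j Re(z x_{jj})$ is a valid (if unstated in the paper) sharpening.
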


\begin{proof}
Since $X\in \mathbb{M}^s_{n, \alpha_1}$ and $Y \in \mathbb{M}^s_{n, \alpha_2}$, the inequality in \eqref{AA} implies that
$$
\omega (X \circ Y ) \leq  sec(\alpha_1)\sec(\alpha_2) \omega( Re(zX) \circ Re(wY)),
$$
for some $z, w \in \mathbb{C}$ with $|z|=|w|=1$.
Since $Re(zX)$ is positive semidefinite, \eqref{I} implies that
$$
\omega( Re(zX) \circ Re(wY)) \leq  \max_j \, Re(zx_{jj}) \, \omega \left(  Re(wY) \right).
$$
Now, we have

\begin{align}
\omega (X \circ Y ) & = sec(\alpha_1)\sec(\alpha_2) \omega( Re(zX) \circ Re(wY))  \notag \\
&\leq  sec(\alpha_1)\sec(\alpha_2)  \max_j \, Re(zx_{jj}) \, \omega \left(  Re(wY) \right) \notag \\
&\leq  sec(\alpha_1)\sec(\alpha_2)  \max_j \, |z x_{jj}| \, \omega \left(  wY \right) \notag \\
& =    sec(\alpha_1)\sec(\alpha_2)  \max_j \, |x_{jj}| \, \omega \left(  Y \right). \notag
\end{align}

Hence,
\begin{equation}\label{N1}
\omega (X \circ Y ) \leq sec(\alpha_1)\sec(\alpha_2)   \max_j \, |x_{jj}| \, \omega \left(  Y \right).
\end{equation}

Similarly, we have

\begin{equation}\label{N2}
\omega (X \circ Y ) \leq sec(\alpha_1)\sec(\alpha_2)   \max_j \, |y_{jj}| \, \omega \left(  X \right).
\end{equation}
The result follows by combining \eqref{N1} and \eqref{N2}.
\end{proof}

\begin{corollary}
If $X=[x_{ij}], Y=[y_{ij}] \in \M$ are accretive-dissipative, then
$$
\omega(X \circ Y) \leq 2 \min \{ \max_{j} | x_{jj} |  \, \,  \omega(Y), \max_{j} | y_{jj}| \, \,  \omega(X) \}.
$$
\end{corollary}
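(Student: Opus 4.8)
The plan is to recognize this statement as a direct specialization of the immediately preceding theorem, obtained by inserting the specific sectorial index of accretive-dissipative matrices. First I would invoke the observation recorded in the introduction: a matrix $X \in \M$ is accretive-dissipative if and only if $W(e^{-\pi i/4} X) \subset S_{\pi/4}$, so that every accretive-dissipative matrix belongs to $\MMM$ with index $\alpha = \pi/4$. Applying this to both $X$ and $Y$, the hypotheses of the preceding theorem are satisfied with $\alpha_1 = \alpha_2 = \pi/4$.

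Next I would simply feed these values into the bound furnished by that theorem, namely
$$
\omega(X \circ Y) \leq \sec(\alpha_1)\sec(\alpha_2)\,\min\left\{ \max_{j} |x_{jj}|\,\omega(Y),\; \max_{j} |y_{jj}|\,\omega(X) \right\},
$$
and then evaluate the scalar prefactor. Since $\sec(\pi/4) = \sqrt{2}$, we have $\sec(\alpha_1)\sec(\alpha_2) = \sqrt{2}\cdot\sqrt{2} = 2$, which collapses the constant to exactly the asserted value and completes the argument.

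I expect no genuine obstacle here: the statement is a routine corollary, and the only thing to verify is the bookkeeping that accretive-dissipative matrices sit in $\MMM$ at the distinguished angle $\pi/4$, together with the elementary secant evaluation. The entire content is carried by the preceding theorem, so the proof amounts to specialization rather than new work.
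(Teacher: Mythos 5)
Your proof is correct and matches the paper's approach exactly: the corollary is a direct specialization of the preceding theorem, using the fact that accretive-dissipative matrices lie in $\MMM$ with $\alpha=\pi/4$ and that $\sec(\pi/4)\sec(\pi/4)=2$. The paper states this corollary without a separate proof precisely because it is this routine substitution, which is the same pattern used for its earlier corollaries of Theorem \ref{Th-1}.
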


\begin{corollary}
If $X, Y \in \M$ are accretive (or dissipative), then
$$
\omega(X \circ Y) \leq  (1+a^2) \min \left \{ \max_{j} | x_{jj} |  \, \,  \omega(Y), \max_{j} | y_{jj}| \, \,  \omega(X) \right\},
$$
where
$$
a = \max \{ | \lambda_1 \left (   \left(Re X \right)^{-1} Im X  \right )| , | \lambda_1 \left (   \left(Re Y \right)^{-1} Im Y  \right )| \}.
$$
\end{corollary}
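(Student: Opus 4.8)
The plan is to obtain the statement directly from the preceding theorem by specializing the sectorial indices to those of accretive (or dissipative) matrices. First I would recall, via the Remark following \eqref{Accretive}, that an accretive matrix $X$ (with $Re(X)>0$) is sectorial and belongs to $\mathbb{M}^s_{n,\alpha_X}$, where the index $\alpha_X=\tan^{-1}\!\bigl(|\lambda_1((Re X)^{-1} Im X)|\bigr)$ is given by \eqref{Accretive}; if instead $X$ is dissipative then $iX$ is accretive, so $X$ lies in the same class. Applying this to both matrices places $X \in \mathbb{M}^s_{n,\alpha_X}$ and $Y \in \mathbb{M}^s_{n,\alpha_Y}$.

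Next I would invoke the preceding theorem with $\alpha_1=\alpha_X$ and $\alpha_2=\alpha_Y$, which yields
$$
\omega(X \circ Y) \leq \sec(\alpha_X)\sec(\alpha_Y)\,\min\left\{ \max_j |x_{jj}|\,\omega(Y),\ \max_j |y_{jj}|\,\omega(X) \right\}.
$$
It then remains only to estimate the scalar factor $\sec(\alpha_X)\sec(\alpha_Y)$. Writing $a_X=|\lambda_1((Re X)^{-1} Im X)|$ and $a_Y=|\lambda_1((Re Y)^{-1} Im Y)|$, the identity $\sec(\tan^{-1}(t))=\sqrt{1+t^2}$ already used in the product-case corollaries gives $\sec(\alpha_X)=\sqrt{1+a_X^2}$ and $\sec(\alpha_Y)=\sqrt{1+a_Y^2}$.

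Therefore $\sec(\alpha_X)\sec(\alpha_Y)=\sqrt{(1+a_X^2)(1+a_Y^2)}$, and since $a=\max\{a_X,a_Y\}$ one has $1+a_X^2\leq 1+a^2$ and $1+a_Y^2\leq 1+a^2$, so the product is at most $(1+a^2)$. Substituting this bound back into the displayed inequality produces exactly the claimed estimate.

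The only step requiring any care — and it is genuinely minor — is the final monotonicity estimate $\sqrt{(1+a_X^2)(1+a_Y^2)}\leq 1+a^2$; everything else is bookkeeping inherited from the preceding theorem together with the secant computation already performed in the corollaries following Theorem \ref{H1}. I do not anticipate a real obstacle: the result is a clean specialization, in exact parallel with the accretive corollaries proved earlier for ordinary products.
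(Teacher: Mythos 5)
Your proposal is correct and is exactly the argument the paper intends: the corollary is stated without an explicit proof, as an immediate specialization of the preceding theorem with $\alpha_1=\alpha_X$, $\alpha_2=\alpha_Y$ from \eqref{Accretive}, using $\sec(\tan^{-1}(t))=\sqrt{1+t^2}$ and $\sqrt{(1+a_X^2)(1+a_Y^2)}\leq 1+a^2$, precisely as in the paper's proof of the analogous corollary for ordinary products after Theorem \ref{Th-1}. No gaps; the monotonicity step you flag is indeed the only (trivial) estimate beyond bookkeeping.
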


Another upper bound for the numerical radius of a Hadamard product of two sectorial matrices can be obtained as follows.

\begin{theorem}
Let $X \in \mathbb{M}^s_{n, \alpha_1}$ and $Y \in \mathbb{M}^s_{n, \alpha_2}$. Then
$$
\omega(X \circ Y)
\leq \min  \left \{(1+ \tan \alpha_1)  \omega (Re \, X ) \omega (Y), (1+ \tan \alpha_2)\omega (X)  \omega (Re Y ) \right\}.
$$
Consequently,
$$
\omega(X \circ Y) \leq (1+ \tan \alpha)\omega (X)  \omega (Y ),
$$
where $\alpha= \max \{ \alpha_1, \alpha_1\}$.
\end{theorem}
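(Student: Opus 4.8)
The plan is to exploit the bilinearity of the Hadamard product together with the Cartesian decomposition, so that sectoriality enters only through the comparison of the imaginary and real parts supplied by Remark \ref{R1}. First I would split $X$ into its Hermitian pieces, writing $X = Re(X) + i\,Im(X)$, and use bilinearity of $\circ$ to obtain
$$
X \circ Y = Re(X) \circ Y + i \big( Im(X) \circ Y \big).
$$
Applying the triangle inequality for the norm $\omega(\cdot)$ then gives
$$
\omega(X \circ Y) \le \omega\big( Re(X) \circ Y \big) + \omega\big( Im(X) \circ Y \big).
$$

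The second step is to observe that both $Re(X)$ and $Im(X)$ are Hermitian, hence normal, so inequality \eqref{H} applies to each summand and yields $\omega(Re(X) \circ Y) \le \omega(Re(X))\,\omega(Y)$ and $\omega(Im(X) \circ Y) \le \omega(Im(X))\,\omega(Y)$. The sectoriality of $X$ is then brought in through Remark \ref{R1}(2), which furnishes $\omega(Im(X)) \le \tan(\alpha_1)\,\omega(Re(X))$. Combining these three facts collapses the estimate to
$$
\omega(X \circ Y) \le \big(1 + \tan \alpha_1\big)\, \omega(Re\,X)\, \omega(Y).
$$
Running the identical argument with the roles of $X$ and $Y$ interchanged — decomposing $Y = Re(Y) + i\,Im(Y)$ and invoking Remark \ref{R1}(2) for $Y$ — produces the companion bound $(1+\tan\alpha_2)\,\omega(X)\,\omega(Re\,Y)$, and taking the minimum of the two gives the first displayed inequality.

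For the final \emph{consequently} estimate I would simply relax either of the two bounds: by Remark \ref{R1}(1) one has $\omega(Re\,X) \le \omega(X)$ and $\omega(Re\,Y)\le\omega(Y)$, and since $\tan$ is increasing on $[0,\pi/2)$ one has $\tan\alpha_i \le \tan\alpha$ for $\alpha = \max\{\alpha_1,\alpha_2\}$; substituting both into either bound yields $\omega(X\circ Y) \le (1+\tan\alpha)\,\omega(X)\,\omega(Y)$.

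I expect the only genuine subtlety to be the rotation built into the definition of $\mathbb{M}^s_{n,\alpha_1}$: membership guarantees only that $W(zX)\subset S_{\alpha_1}$ for some unimodular $z$, not that $W(X)\subset S_{\alpha_1}$ outright. Because $\omega\big((zX)\circ Y\big)=\omega(X\circ Y)$, the decomposition argument should be applied to $zX$ and $wY$ rather than to $X$ and $Y$ directly — the same device used in Theorems \ref{Th-1} and \ref{H1} — with $\omega(Im(zX))\le\tan(\alpha_1)\,\omega(Re(zX))$ from Remark \ref{R1}(2) as the operative inequality. In the aligned case $z=w=1$ this reproduces the stated bounds verbatim; in the general case the real parts appearing are $Re(zX)$ and $Re(wY)$, but this is immaterial for the final estimate since $\omega(Re(zX))\le\omega(zX)=\omega(X)$. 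Past this point the argument is only the routine bookkeeping of the triangle inequality and \eqref{H}.
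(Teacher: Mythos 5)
Your proof is correct and is essentially the paper's own argument: Cartesian decomposition of one factor, bilinearity of $\circ$ plus the triangle inequality for $\omega(\cdot)$, inequality \eqref{H} applied to the Hermitian pieces, Remark \ref{R1}(2) to dominate the imaginary part by the real part, symmetry in $X$ and $Y$ to get the minimum, and $\omega(Re(X))\le\omega(X)$ for the \emph{consequently} statement. If anything, you are more careful than the paper: its proof applies Remark \ref{R1}(2) to $X$ directly, tacitly assuming $W(X)\subset S_{\alpha_1}$ (i.e. $z=w=1$), whereas you correctly observe that for general $X\in\mathbb{M}^s_{n,\alpha_1}$ the argument must be run on $zX$ and $wY$, so the first displayed bound really involves $Re(zX)$ and $Re(wY)$, while the final estimate $\omega(X\circ Y)\le(1+\tan\alpha)\,\omega(X)\,\omega(Y)$ survives unchanged.
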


\begin{proof}
The second inequality follows from the the first one and fact that $\omega(Re X) \leq \omega(X) \, \forall X \in \M$.
To prove the first inequality, let $X=A + i B$ be the cartesian decomposition of $X$.
Then
\begin{align}\label{zzz}
\omega(X \circ Y) & = \omega((A + i B) \circ Y)    \notag \\
& = \omega(A \circ Y  + i B \circ Y) \notag \\
& \leq \omega(A \circ Y) + \omega (B \circ Y) \notag \\
& \leq \omega(A) \omega (Y) + \omega (B) \omega (Y) \notag \quad \text{by \eqref{H}}  \notag \\
& \leq \omega(A) \omega (Y) + \tan(\alpha_1) \,  \omega (A) \omega (Y) \notag \quad \text{(by part 2 of Remark \ref{R1})}  \notag \\
& \leq (1+ \tan \alpha_1)\omega (A) \omega (Y) \notag \\
& =    (1+ \tan \alpha_1)\omega (Re(X) ) \omega (Y). \notag
\end{align}

Hence,
\begin{equation}\label{F5}
\omega(X \circ Y) \leq (1+ \tan \alpha_1)  \omega (Re X ) \omega (Y).
\end{equation}

Similarly, one can show that
\begin{equation}\label{F6}
\omega(X \circ Y) \leq (1+ \tan \alpha_2)  \omega (Re Y ) \omega (X).
\end{equation}

The result follows by combining \eqref{F5} and \eqref{F6}.
\end{proof}

\begin{corollary}
Let $X, Y \in \MMM$. Then
$$
\omega(X \circ Y)
\leq (1+ \tan \alpha)   \min  \left \{(\omega (Re \, X ) \omega (Y), \omega (X)  \omega (Re Y ) \right\}.
$$
Consequently,
\begin{equation}\label{Z}
\omega(X \circ Y) \leq (1+ \tan \alpha)  \omega (X) \omega (Y).
\end{equation}
\end{corollary}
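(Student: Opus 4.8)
The plan is to obtain this corollary as the direct specialization of the preceding Theorem to the case in which the two sectorial indices coincide. Since $X, Y \in \MMM$, by definition both matrices lie in $\mathbb{M}^s_{n, \alpha}$ for the \emph{same} value $\alpha$; equivalently, one may take $\alpha_1 = \alpha_2 = \alpha$ in that Theorem. First I would apply the Theorem verbatim to get
$$
\omega(X \circ Y) \leq \min \left\{ (1 + \tan \alpha)\, \omega(Re\, X)\, \omega(Y), \; (1 + \tan \alpha)\, \omega(X)\, \omega(Re\, Y) \right\}.
$$
Because $\alpha \in [0, \pi/2)$ forces $\tan \alpha \geq 0$, the common factor $(1 + \tan \alpha)$ is nonnegative and may therefore be pulled outside the minimum, giving precisely the first claimed inequality
$$
\omega(X \circ Y) \leq (1 + \tan \alpha) \min \left\{ \omega(Re\, X)\, \omega(Y), \; \omega(X)\, \omega(Re\, Y) \right\}.
$$

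For the stated consequence, I would invoke part (1) of Remark \ref{R1}, which yields $\omega(Re\, X) \leq \omega(X)$ and, applied to $Y$, $\omega(Re\, Y) \leq \omega(Y)$. Substituting either of these bounds shows that each of the two entries inside the minimum is dominated by $(1 + \tan \alpha)\, \omega(X)\, \omega(Y)$, and hence so is their minimum. This delivers
$$
\omega(X \circ Y) \leq (1 + \tan \alpha)\, \omega(X)\, \omega(Y),
$$
which is the second assertion.

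There is essentially no obstacle here: the entire analytic content is carried by the preceding Theorem, and this corollary is a routine bookkeeping step, namely setting $\alpha_1 = \alpha_2 = \alpha$, factoring the nonnegative constant out of the minimum, and weakening $\omega(Re\, \cdot)$ to $\omega(\cdot)$ via Remark \ref{R1}. The only point warranting a moment's care is confirming that the factor $(1 + \tan \alpha)$ is nonnegative before extracting it from the minimum, which is immediate from the range $\alpha \in [0, \pi/2)$.
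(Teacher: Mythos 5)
Your proof is correct and matches the paper's intent exactly: the paper states this corollary without proof as the immediate specialization $\alpha_1=\alpha_2=\alpha$ of the preceding theorem, which is precisely your argument (factor out the common $(1+\tan\alpha)$ and weaken $\omega(Re\,\cdot)$ to $\omega(\cdot)$ via Remark \ref{R1}).
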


Finally, we remark that inequality \eqref{Z} presents an improvement to \eqref{C} for $0 \leq \alpha  < \frac{\pi}{4}$.

\subsection*{Conflict of interest}
The authors declare that they have no conflict of interest.

\end{document}